\newtheorem{definition}{Definition}
\newtheorem{theorem}{Theorem}
\newtheorem{example}{Example}
\def\rit{\mathbb{R}}
\begin{document}

\ZTPAuthor{\ZTPHasOrcid{Ksenia Bestuzheva}{0000-0002-7018-7099},
\ZTPHasOrcid{Ambros Gleixner}{0000-0003-0391-5903},
Stefan Vigerske}

\ZTPTitle{A Computational Study of Perspective Cuts}

\ZTPNumber{21-07}
\ZTPMonth{March}
\ZTPYear{2021}

\title{A Computational Study of Perspective Cuts}
\author[1]{Ksenia Bestuzheva}
\author[2]{Ambros Gleixner}
\author[3]{Stefan Vigerske}

\affil[1]{Zuse Institute Berlin, Takustr.~7, 14195 Berlin, Germany\\
          \texttt{bestuzheva@zib.de}}

\affil[2]{Zuse Institute Berlin and HTW Berlin, Germany\\
          \texttt{gleixner@zib.de}}

\affil[3]{GAMS Software GmbH, c/o Zuse Institute Berlin\\
          \texttt{svigerske@gams.com}}

\zibtitlepage
\maketitle

\begin{abstract}
The benefits of cutting planes based on the perspective function are well known
for many specific classes of mixed-integer nonlinear programs with on/off
structures. However, we are not aware of any empirical studies that evaluate
their applicability and computational impact over large, heterogeneous test sets
in general-purpose solvers. This paper provides a detailed computational study
of perspective cuts within a linear programming based branch-and-cut solver for general
mixed-integer nonlinear programs.
Within this study, we extend the applicability of perspective cuts from convex to nonconvex nonlinearities.
This generalization is achieved by applying a perspective strengthening to valid linear inequalities which separate solutions of linear relaxations.
The resulting method can be applied to any constraint where all variables appearing in nonlinear
terms are semi-continuous and depend on at least one common indicator variable.
Our computational experiments show that adding perspective cuts for convex constraints yields a
consistent improvement of performance, and adding perspective cuts for nonconvex constraints
reduces branch-and-bound tree sizes and strengthens the root node relaxation, but has no
significant impact on the overall mean time.

\end{abstract}

\section{Introduction}

Consider a mixed-integer nonlinear program (MINLP) with semi-continuous variables:
\begin{subequations}\label{minlp}
\begin{align}\min\; &\left<\bm c,(\bm x, \bm y, \bm z)\right> \label{eq:obj}\\
\text{s.t.}\; &g(\bm x,\bm y,\bm z) \leq 0,\label{eq:conss}\\
&(\underline{y}_j - y^0_j)z_k \leq y_j - y^0_j \leq (\overline{y}_j - y^0_j)z_k,~\forall j \in \mathcal{S}_k, ~\forall k \in \mathcal{I}, \label{eq:sc} \\
&\bm x \in \rit^n, ~\bm y \in \rit^p, ~\bm z \in \{0,1\}^q.
\end{align}
\end{subequations}
Here, $\left<\bm c,(\bm x,\bm y,\bm z)\right>$ is the linear objective function given by a scalar product of a constant vector $\bm c \in \rit^{n+p+q}$ and the vectors of continuous variables $\bm x$, semi-continuous variables $\bm y$ and binary variables $\bm z$.
Constraints~\eqref{eq:conss} are given by inequalities $g(\bm x,\bm y,\bm z) \leq 0$, where $g: ~\rit^n  \times
\rit^p \times [0,1]^q \rightarrow \rit^m$ is a vector function and some of its elements $g_i$ are nonlinear.

The set~$\mathcal{S}_k \subseteq \{1,\ldots,p\}$ shall contain the indices of semi-continuous
variables controlled by the indicator variable $z_k$, and $\mathcal{I} \subseteq \{1,\dots,q\}$ is
the set of indices of all indicator variables.
Constraints~\eqref{eq:sc} ensure that for each $j \in \mathcal{S}_k$, the value of $y_j$
belongs to the domain $[\underline{y}_j, \overline{y}_j]$ when the indicator variable $z_k$ is
equal to~$1$ and has a fixed value $y^0_j$ when $z_k$ is equal to $0$.
Semi-continuous variables are typically used to model ``on'' and ``off'' states of a process and can be found in such problems as optimal line switching in electrical networks \cite{fisher2008optimal},  blending \cite{williams1978reformulation} and production planning \cite{barbaro1986generalized}, to name but a few.

In order to simplify the notation, in the rest of the paper the subscript $k$ will be omitted and
we will be referring to a vector of semi-continuous variables $\bm y \in \rit^p$ controlled by the
indicator variable $z \in \{0,1\}$.
The semi-continuity relation is then defined by the inequality
$(\underline{\bm y} - \bm y^0)z \leq \bm y - \bm y^0 \leq (\overline{\bm y} - \bm y^0)z.$

Without loss of generality, we consider constraints of the form
\begin{equation}\label{eq:disjcons}g(\bm x,\bm y) = f(\bm y) - x_\ell \leq 0\end{equation}
for some $\ell\in\{1,\ldots,n\}$.
The continuous variable $x_\ell$ represents the linear non-semi-continuous part and
the same arguments as presented in this paper can be directly adapted for a more general linear
part.
When $z = 0$, function $f$ is reduced to a fixed value $f(\bm y^0)$.
A common example of such constraints are on/off constraints which become redundant when the corresponding indicator variable is set to~$0$.

Many state-of-the-art algorithms for the solution of MINLP~\eqref{minlp} make use of nonlinear and
linear programming relaxations where the condition $z \in \{0,1\}$ is replaced with $z \in [0,1]$.
However, for a constraint of the form \eqref{eq:disjcons}, simply dropping the integrality
condition generally does not produce the tightest possible continuous relaxation.
The reason for this is that the dependence of the bounds on $\bm y$ on the
indicator variable $z$ is not exploited by a straightforward continuous relaxation.
Consequently, the same applies for the linearization of Constraint \eqref{eq:disjcons} via gradient
cuts~\cite{kelley1960cutting}, that is, inequalities
\begin{equation}\label{eq:gradientcut}f(\hat{\bm y}) + \left<\nabla f(\hat{\bm y}),\bm y - \hat{\bm y}\right> \leq x_\ell,\end{equation}
where $\hat{\bm y}$ is the point at which $f$ is linearized.

The strongest continuous relaxation of the set described by Constraint~\eqref{eq:disjcons}, given
that $\bm y$ is semi-continuous, can be achieved by applying the perspective
reformulation~\cite{frangioni2006perspective}.
Linearizing this reformulation provides valid linear inequalities known as perspective cuts.

In this paper we present a computational study of perspective cuts within SCIP~\cite{gamrath2020scip},
a general-purpose solver that implements an LP-based branch-and-cut algorithm to solve mixed-integer nonlinear programs to global optimality.
Section~\ref{sec:pcuts} provides the theoretical background for this study and a review of
applications that can be found in existing literature.
In Section \ref{sec:genpcuts}, we describe our approach to creating perspective cuts and show that
for convex instances, it is equivalent to linearizing the perspective formulation via gradient cuts.
Section~\ref{sec:impl} gives an outline of our implementation of perspective cuts in SCIP,
which includes detection of suitable structures and separation and strengthening of perspective cuts.
Finally, in Section \ref{sec:compres} the results of computational experiments on instances from MINLPLib\footnote{\url{https://www.minlplib.org}}~\cite{10.1287/ijoc.15.1.114.15159} are presented.

\section{Perspective Formulations for Convex Nonlinearities}\label{sec:pcuts}

This section gives a review of the existing literature on theoretical and computational results
related to perspective formulations for convex nonlinearities.

\subsection{Theoretical background}\label{sec:theory}

Let $F$ denote the feasible region defined by Constraint \eqref{eq:disjcons} and the semi-continuity
constraints~\eqref{eq:sc} for an indicator $z$.
$F$ can be written as a union of two sets $F^0$ and $F^1$ corresponding to values $0$ and $1$ of $z$, respectively:
\begin{gather}\label{offset}F^0 = \{(x_\ell, \bm y, z) ~|~ x_\ell \geq f(\bm y^0), ~\bm y = \bm y^0, ~z = 0\},\\
\label{onset}F^1 = \{(x_\ell, \bm y, z) ~|~ x_\ell \geq f(\bm y), ~\bm y \in [\underline{\bm y},\overline{\bm y}], ~z = 1\}.\end{gather}

The tightest possible convex relaxation of $F$ is its convex hull. Ceria and Soares \cite{Ceria} studied convex hull formulations for unions of convex sets and their applications to disjunctive programming. Stubbs and Mehrotra \cite{stubbs1999branch} described the convex hull of the feasible set of a convex 0-1 program and developed a procedure for generating cutting planes. Grossmann and Lee \cite{grossmann2003generalized} extended the convex hull results to generalized disjunctive programs (GDPs). Similarly to disjunctive programming, feasible sets of GDPs are given as unions of convex sets, but more general logical relations are also allowed. These works used the perspective function, which is defined as follows:

\begin{definition}\cite{rockafellar2015convex} For a given convex function $f: \rit^p \rightarrow \rit$, its perspective function $\tilde{f}: \rit^{n+1} \rightarrow \left(\rit \cup \{+\infty\}\right)$ is defined as:
	$$\tilde{f} (\bm y,z) =
	\begin{cases}
	zf(\bm y/z), & \text{ if } z>0,\\
	+\infty, & \text{ otherwise,}
	\end{cases}
	$$
	where $\bm y \in \rit^p$, $z \in \rit$.
\end{definition}

These early results are applicable to convex sets with few non-restrictive conditions and utilize an
extended variable space.
Frangioni and Gentile~\cite{frangioni2006perspective} proposed a
reformulation in the original space for a special case.
Considering a semi-continuous vector $\bm y$, an
indicator variable $z$, and a convex function $f$ depending only on $\bm y$ such that
$\bm y^0 = (0,\dots,0)$ and $f(\bm 0) = 0$, they capture the disjunctive structure by defining a new
nonconvex function $f^d$:

$$f^d(\bm y,z) = \begin{cases} f(\bm y) &\text{ if } z = 1, ~y \in [\underline{\bm y}, \overline{\bm y}],\\
                         0 &\text{ if } \bm y = z = 0,\\
                         +\infty &\text{ otherwise.}\end{cases}$$
The function $f^d$ is directly related to the set $F$: the latter can be described as the set of all
points $(x_\ell,\bm y,z)$ such that $f^d(\bm y,z)$ is finite and $f^d(\bm y,z) \leq x_\ell$.
Frangioni and Gentile describe the convex envelope of $f^d$:
$$\overline{co}f^d(\bm y,z) = \begin{cases}\tilde f(\bm y,z) &\text{ if }z \in (0,1],\\
                                     0 &\text{ if } z = 0,\\
                                     +\infty &\text{ otherwise.}\end{cases}$$

In a related work, G{\"u}nl{\"u}k and Linderoth~\cite{Gunluk10} show that the convex hull of $F$ is
given by
\begin{align}
  \text{conv}(F) = \{(x_\ell,\bm y,z) ~|~ \overline{co}f^d(\bm y,z) &\leq x_\ell,\nonumber\\
  (\underline{\bm y} - \bm y^0)z &\leq \bm y -
  \bm y^0 \leq (\overline{\bm y} - \bm y^0)z, ~z \in [0,1]\}.\label{eq:pr}
\end{align}

Therefore, replacing $f$ with $\overline{co}f^d$ in Constraint~\eqref{eq:disjcons} results in a
reformulation with the tightest possible continuous relaxation: the \emph{perspective reformulation}.  This is a valid reformulation
since $f = \overline{co}f^d$ for $z \in \{0,1\}$.

\begin{figure*}[!ht]
	\begin{subfigure}{.49\linewidth}
	  \centering
	  \includegraphics[width=1\linewidth]{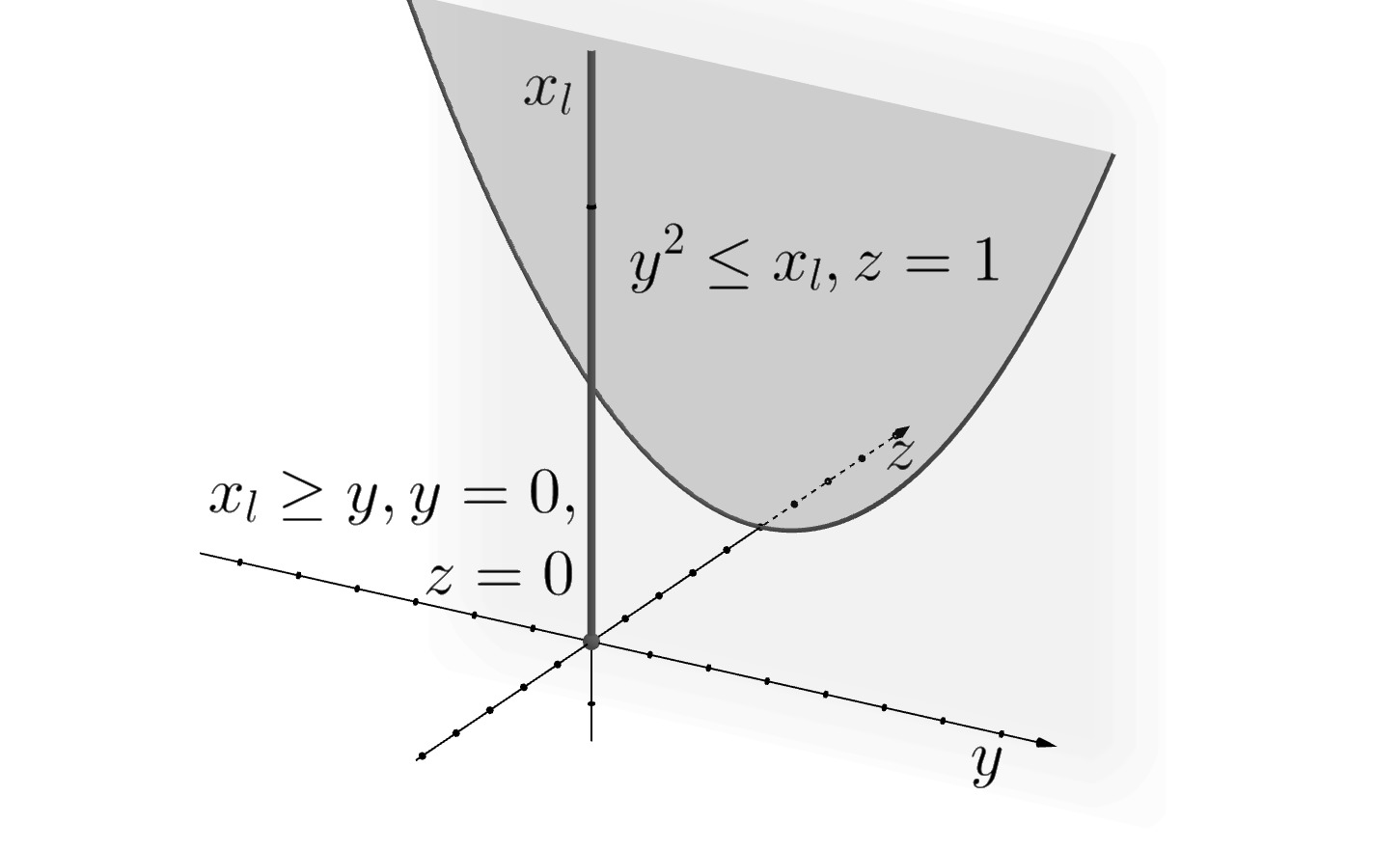}
	  \caption{the mixed-integer set}
          \label{fig:disj_set_ex0}
	\end{subfigure}
	\begin{subfigure}{.49\linewidth}
	  \centering
	  \includegraphics[width=1\linewidth]{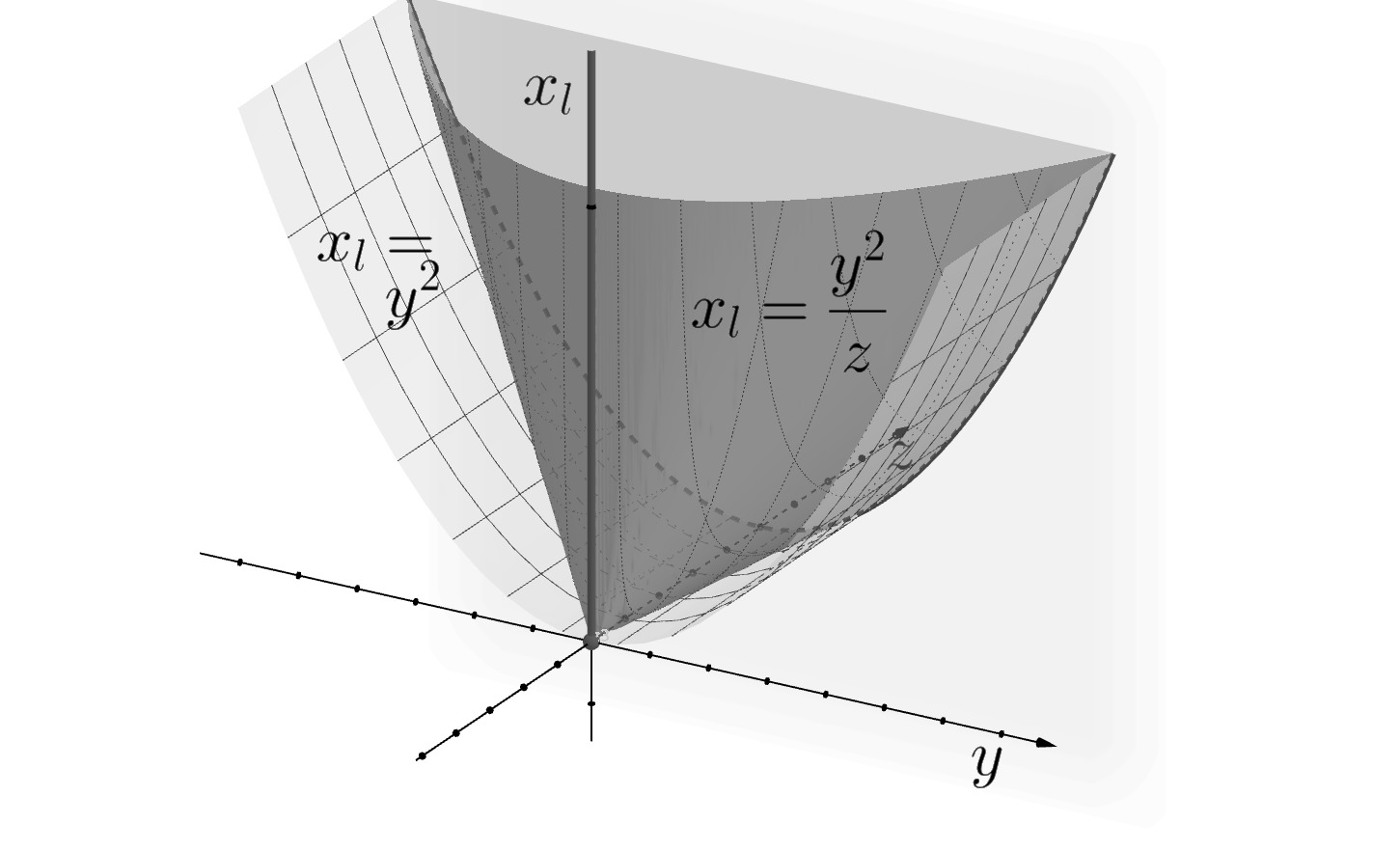}
	  \caption{the continuous relaxations}
          \label{fig:disj_set_ex1}
	\end{subfigure}
	\caption{Example of a disjunctive set and its convex hull}
	\label{fig:disj_set_ex}
\end{figure*}

Figure~\ref{fig:disj_set_ex} shows an example of a disjunctive set and compares its continuous
relaxations.
The disjunctive set (shown in Figure~\ref{fig:disj_set_ex0}) consists of the ray $\{(x_\ell,y,z) ~|~ x_\ell \geq 0, ~y = 0,
~z = 0\}$ and the convex set $\{(x_\ell,y,z) ~|~ x_\ell \geq f(y), ~z = 1\}$, where $f(y)=y^2$.
The convex hull, shown in Figure~\ref{fig:disj_set_ex1}, is the closure of the set of all points
above the dark gray surface defined by $x_\ell = y^2/z$, $z \in (0,1]$.
This equation is obtained by applying the perspective operator to $f$:
$\tilde f(y,z) = zf(y/z) = y^2/z$.
For comparison, the boundary of the straightforward continuous relaxation given by $x_\ell = y^2$,
$z \in [0,1]$, is shown in Figure~\ref{fig:disj_set_ex1} in light gray color.

Due to the division by $z$ in the perspective function, the perspective reformulation~\eqref{eq:pr}
is non-differentiable at $z = 0$.
In some special cases, formulation~\eqref{eq:pr} can be written as a second-order cone (SOC)
constraint~\cite{Tawarmalani00convexextensions,AKTURK2009187,Gunluk10,FRANGIONI2009206}.
In particular, this is possible when Constraint~\eqref{eq:disjcons} itself is SOC-representable.

Frangioni and Gentile~\cite{frangioni2011projected,frangioni2016approximated} introduced projection
approaches for additively separable closed convex functions.
In the \emph{projected perspective reformulation} (P$^2$R)~\cite{frangioni2011projected}, the perspective
function is projected into the space of continuous variables and rewritten as a piecewise-convex
function.
This technique avoids the numerical issues associated with the perspective functions while yielding
strong bounds, but at the cost of using piecewise-continuous functions which cannot be directly
passed to off-the-shelf solvers.
The \emph{approximated projected perspective reformulation} (AP$^2$R)
method~\cite{frangioni2016approximated} lifts the P$^2$R formulation back into the original space
by reintroducing the indicator variables.
AP$^2$R can be solved by general-purpose solvers and, if $\bm y^0 \leq \underline{\bm y}$, has the same
number of variables and constraints as the original problem.
The bound provided by AP$^2$R is, however, generally weaker than the one from P$^2$R.

Alternatively, the perspective reformulation~\eqref{eq:pr} can be represented by an infinite number
of linear outer approximations which are then dynamically separated.
Suppose that we have a point $(\hat x_\ell, \hat{\bm y}, \hat z)$ such that $\hat z \in (0,1)$ and
$\hat x_\ell < \tilde f(\hat{\bm y}, \hat z)$.
By performing first-order analysis of the convex envelope $\overline{co}f^d$, Frangioni and
Gentile~\cite{frangioni2006perspective} derive cuts that separate $(\hat x_\ell, \hat{\bm y}, \hat z)$
from the convex hull of $F$, referred to as \emph{perspective cuts}:
\begin{equation}\label{eq:pc0}\left<\nabla f(\bm y^*),\bm y\right> + \left(f(\bm y^*) - \left<\nabla f(\bm y^*),\bm y^*\right>\right)z \leq x_\ell,\end{equation}
where $\bm y^* = \hat{\bm y} / \hat z$.

It is easy to adjust the perspective reformulation and the inequalities~\eqref{eq:pc0} for the case
of nonzero $\bm y^0$ and $f(\bm y^0)$:
\begin{equation}\label{eq:pc}\left<\nabla f(\bm y^*),\bm y-\bm y^0\right> + (f(\bm y^*) - f(\bm y^0) - \left<\nabla f(\bm y^*),\bm y^* - \bm y^0\right>)z + f(\bm y^0) \leq x_\ell,\end{equation}
where $\bm y^* = (\hat{\bm y} - \bm y^0) / \hat z + \bm y^0$.
We refer to~\eqref{eq:pc} as the \emph{perspective cut} at $\bm y^*$.

\subsection{Existing applications and computational results}

Perspective cuts and reformulations were tested on several applications which contain convex functions of semi-continuous variables.

Frangioni and Gentile~\cite{frangioni2006perspective} applied perspective cuts~\eqref{eq:pc} to the \emph{thermal unit commitment} problem.
In order to avoid the technical difficulties of incorporating perspective cuts into a general-purpose solver, the authors implemented their own NLP-based branch-and-cut algorithm.
Perspective cuts are applied to the objective function via a specialized separation procedure,
which replaces a univariate term with its perspective linearization if the perspective linearization
is tighter at the current relaxation solution.
The linearization is represented by an auxiliary variable, and as more perspective cuts are added
for the term, the variable is set to be equal to the maximum of all linearizations.
Perspective cuts were shown to have a considerable impact on the performance.
The geometric mean of the running time of the best performing setting was smaller than that for the algorithm with the straightforward continuous relaxation by a factor of $60$.

Perspective reformulations were studied by G{\"u}nl{\"u}k and
Linderoth~\cite{Gunluk10,10.1007/978-1-4614-1927-3_3}.
Their key observation is that for some problems, the perspective reformulation can be written with the use of second-order cone constraints.
The applications studied in this paper are:
\begin{itemize}
\item \emph{Separable quadratic uncapacitated facility location} on a testset consisting of $16$ instances.
With the perspective reformulation, $50\%$ more instances are solved within the time limit of $8$ hours and on the instances that are solved with both formulations, the perspective formulation is faster by a factor of $8$ when comparing the geometric mean.
\item \emph{Network design with congestion constraints} on a testset consisting of $35$ instances.
The perspective formulation is solved for 29 instances within the time limit of $4$ hours, as opposed to only 2 instances with the standard formulation.
\item \emph{Mean-variance optimization (portfolio optimization)} on a testset consisting of $20$ instances.
Although none of the instances are solved within the time limit of 10,000 CPU seconds, perspective reformulation significantly improves the gap.
For example, the final gap between the best found lower and upper bounds is reduced from $185.1\%$
with the standard formulation to $4.2\%$ with the perspective reformulation on
instances of smaller size, and from $490.0\%$ to $5.9\%$ on instances of larger size.
\end{itemize}

Atamt{\"u}rk and G{\'o}mez~\cite{atamturk2018strong} applied the perspective-based conic reformulation to the \emph{image segmentation} problem, testing it on $4$ instances of different sizes.
On the one instance that was solved within a time limit of $1$ hour, the running time was reduced by a factor of $18$ when compared to the standard formulation.
On the three remaining instances, using the perspective formulation resulted in a $45$-$55\%$ decrease of the remaining gap at time limit.

Aktürk et al.~\cite{AKTURK2009187} presented a perspective-based conic reformulation of the
\emph{machine-job assignment problem with controllable processing times}.
The tests were conducted on 180 randomly generated instances of varying sizes with quadratic and cubic
objectives.
For problems with a quadratic objective, 91\% of the 90 instances were solved
when using the strengthened conic formulation, whereas at most 36\% of instances were solved when
using non-perspective formulations.
For problems with a cubic objective, 88\% of the 90 instances were solved with the perspective formulation
and at most 27\% were solved with non-perspective formulations.

A comparison between SOC-based perspective formulations and perspective cutting planes was performed by
Frangioni and Gentile~\cite{FRANGIONI2009206}.
Using the CPLEX-11 solver, they test the two approaches on two sets of mixed-integer quadratic
problems, namely, the \emph{Markowitz mean-variance model} and the \emph{unit commitment problem}.
The difference between the two formulations is particularly significant with the setup used in the
paper~\cite{FRANGIONI2009206} since by default, CPLEX obtains dual bounds by solving nonlinear relaxations.
The results favor the cutting planes approach, with the difference being larger for the
Markowitz mean-variance problem.
The authors observe that the advantage of perspective cuts stems mostly from efficient
reoptimization of linear programs.
They add that the perspective conic reformulation is more competitive for problems that are larger,
more nonlinear (i.e., have more nonlinear constraints or non-quadratic nonlinear constraints) or
have richer structure.

Frangioni and Gentile~\cite{frangioni2011projected,frangioni2016approximated} tested the
projected perspective reformulation (P$^2$R) and the approximated perspective projected reformulation
(AP$^2$R) on \emph{sensor placement}, \emph{nonlinear network design},
\emph{mean-variance portfolio} and \emph{unit commitment} problems.
P$^2$R was implemented as part of a specialized branch-and-bound algorithm and AP$^2$R was solved
directly with CPLEX 12.
Both approaches were compared to perspective cuts implemented as a callback in CPLEX.
Computational results show that P$^2$R is the best performing method for problems that have
a well-suited structure and require little or no branching.
For problems with more complex structures, AP$^2$R is competitive with the perspective cut approach.
When there are constraints linking indicator variables and few linear approximations provide a
good estimate of the original nonlinear function, perspective cuts tend to be the best performing method.

Salgado et al.~\cite{salgado2018perspective} studied the \emph{alternating current optimal power
flow problem with activation/deactivation of generators} (ACOPFG) using $8$ test instances.
They tested two perspective-based reformulations of the objective function.
The first uses four perspective cuts of the form~\eqref{eq:pc}; the other is obtained by applying
AP$^2$R~\cite{frangioni2016approximated}.
Although the results of enhancing the standard ACOPFG model with perspective reformulations are
inconclusive, an outer approximation~\cite{salgado2018alternating} of the problem significantly
benefits from both perspective cuts and AP$^2$R.
The perspective cuts approach performs best, solving one more instance than the standard
formulation within the time limit of $1$~hour and taking less than $4$ seconds on all the remaining
instances, whereas the standard formulation requires over $1000$ seconds on most instances.

\section{Generalized perspective cuts}\label{sec:genpcuts}

If $f$ is non-convex, neither the gradient cuts~\eqref{eq:gradientcut} nor the perspective
cuts~\eqref{eq:pc} are guaranteed to be valid.
However, the perspective reformulation can be applied to a convex underestimator of $f$, from which
the perspective cuts~\eqref{eq:pc} can be derived.
Alternatively, any linear inequality $\phi(\bm y) \leq x_\ell$ that is valid for the `on' set $F^1$
can be adjusted for the `off' set $F^0$.

In the following,
we propose a cut extension procedure that ensures that the generated inequality is equivalent to
$\phi(\bm y) \leq x_\ell$ when the indicator is equal to $1$ and holds with equality at the point
$(x_\ell,\bm y,z) = (f(\bm y^0), \bm y^0, 0)$.
\begin{theorem}[Generalized perspective cuts]\label{thm:cut_strengthening}
Consider a vector of semi-continuous variables $\bm y\in\mathbb{R}^p$ with an indicator $z\in\{0,1\}$, such that $\bm y = \bm y^0$ if $z = 0$, and a
linear inequality $\phi(\bm y) \leq x_\ell$ that is valid for the set
$$F^1 = \{(x_\ell, \bm y, z) ~|~ x_\ell \geq f(\bm y), ~\bm y \in [\underline{\bm y},\overline{\bm y}], ~z = 1\}.$$
Let
$$\tilde \phi(\bm y,z) = \phi(\bm y) + \left(f(\bm y^0) - \phi(\bm y^0)\right)(1-z).$$
Then the linear inequality $\tilde \phi(\bm y,z) \leq x_\ell$ is valid for the set $F^0 \cup F^1$, where
$$F^0 = \{(x_\ell, \bm y, z) ~|~ ~x_\ell \geq f(\bm y^0), ~\bm y = \bm y^0, ~z = 0\}.$$
\end{theorem}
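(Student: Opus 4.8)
The plan is to verify the inequality $\tilde\phi(\bm y,z)\le x_\ell$ directly on each of the two pieces $F^0$ and $F^1$, exploiting that $z$ is binary so that the set $F^0\cup F^1$ splits cleanly according to the value of $z$. The only inputs needed are the assumed validity of $\phi(\bm y)\le x_\ell$ on $F^1$ and the defining bound $x_\ell\ge f(\bm y^0)$ of $F^0$; notably, no convexity or other property of $f$ enters beyond the single value $f(\bm y^0)$, which is exactly what makes the construction applicable to nonconvex $f$.

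First I would handle the case $z=1$: for a point $(x_\ell,\bm y,1)\in F^1$ the factor $1-z$ vanishes, so $\tilde\phi(\bm y,1)=\phi(\bm y)$, and validity of $\phi(\bm y)\le x_\ell$ on $F^1$ gives $\tilde\phi(\bm y,1)\le x_\ell$. Next I would handle the case $z=0$: for a point $(x_\ell,\bm y,0)\in F^0$ the semi-continuity relation forces $\bm y=\bm y^0$, and substituting $\bm y=\bm y^0$, $z=0$ into $\tilde\phi$ gives $\tilde\phi(\bm y^0,0)=\phi(\bm y^0)+\bigl(f(\bm y^0)-\phi(\bm y^0)\bigr)=f(\bm y^0)$, which is $\le x_\ell$ by the definition of $F^0$. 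Together the two cases prove validity on $F^0\cup F^1$. I would close by noting that $\tilde\phi$ is affine in $(\bm y,z)$ because $\phi$ is linear and $(1-z)$ is affine, so the result is a bona fide linear cut, and that the $z=0$ computation also shows it holds with equality at $(f(\bm y^0),\bm y^0,0)$, matching the remark preceding the theorem.

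There is no real analytic difficulty: the proof is a two-line case distinction. The only point requiring care is conceptual --- recognising that the correction term $(f(\bm y^0)-\phi(\bm y^0))(1-z)$ is tailored to vanish at $z=1$ (so the original cut is preserved) and to absorb exactly the difference between $\phi(\bm y^0)$ and the ``off'' value $f(\bm y^0)$ at $z=0$, and that no compatibility hypothesis linking $\phi(\bm y^0)$ and $f(\bm y^0)$ is needed for validity on $F^0$.
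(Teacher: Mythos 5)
Your proof is correct and follows essentially the same route as the paper: a two-case check on $z\in\{0,1\}$, observing that $\tilde\phi(\bm y,1)=\phi(\bm y)$ reduces to the assumed validity on $F^1$ and that $\tilde\phi(\bm y^0,0)=f(\bm y^0)\le x_\ell$ by the definition of $F^0$. The added remarks on affinity and tightness at $(f(\bm y^0),\bm y^0,0)$ are consistent with the discussion surrounding the theorem.
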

\begin{proof}
It is sufficient to check the validity for each possible value of $z \in \{0,1\}$.
By substituting $z = 1$ and $z = 0$ in $\tilde \phi(\bm y,z)$, we immediately obtain
\begin{enumerate}
\item $\tilde \phi(\bm y,1) = \phi(\bm y) ~\forall \bm y \in \mathbb{R}^p$ and

\item $\tilde \phi(\bm y^0,0) = f(\bm y^0)$,
\end{enumerate}
respectively. Therefore, $\tilde \phi(\bm y,z) \leq x_\ell$ is a valid inequality.
\qed
\end{proof}

If the cut $\phi(\bm y) \leq x_\ell$ is already valid for $F^0$, then the described above adjustment always
produces a cut that is at least as strong as the original cut.
Since $\phi(\bm y) \leq x_\ell$ is in this case implied by $f(\bm y) \leq x_\ell$ for
$(x_\ell,\bm y) \in F^0$, we have $\phi(\bm y^0) \leq f(\bm y^0)$.
Hence the coefficient of $(1-z)$ in $\tilde \phi(\bm y,z)$ is nonnegative and
$$\tilde \phi(\bm y,z) \geq \phi(\bm y), ~\forall z \in [0,1], ~\forall \bm y\in\mathbb{R}^p.$$
If additionally $\phi(\bm y^0) < f(\bm y^0)$, i.e., the original cut is not tight at $\bm y^0$, then the new
cut is also stronger.
Otherwise, if $\phi(\bm y) \leq x_\ell$ does not hold for $F^0$ (that is, if $\phi(\bm y^0) > f(\bm y^0)$), then the adjustment is necessary to obtain a cut
that is valid for $F^0 \cup F^1$.

This cut extension procedure has two main advantages:
\begin{enumerate}
\item It does not depend on the convexity of $f$ and requires no assumptions
on the cut except for its validity for $F^1$.
\item In the case where $\bm y^0 \notin [\underline{\bm y}, \overline{\bm y}]$, variable bounds for $F^1$ are
tighter than those for $F^0 \cup F^1$.
This is useful for non-convex constraints since the tightness of their relaxations depends on
variable bounds, and therefore cuts constructed for $\bm y \in [\underline{\bm y}, \overline{\bm y}]$ will
generally be stronger than those for $\bm y \in [\min\{\bm y^0,\underline{\bm y}\}, \max\{\bm y^0,\overline{\bm y}\}]$.
\end{enumerate}

When the cut strengthening is applied to the convex setting, the result is equivalent to the
well-known perspective cuts:

\begin{theorem}[Alternative derivation of perspective cuts]\label{prop1}
Suppose that $f: \rit^p \rightarrow \rit$ is convex and
$(\hat x_\ell, \hat{\bm y}, \hat z) \not\in \text{conv}(F)$ as defined in Section~\ref{sec:theory}.
Consider the gradient cut~\eqref{eq:gradientcut} at point $\bm y^* = (\hat{\bm y} - \bm y^0) /
\hat z + \bm y^0$ for Constraint~\eqref{eq:disjcons}:
$$\phi(\bm y) = f(\bm y^*) + \left<\nabla f(\bm y^*),\bm y - \bm y^*\right> \leq x_\ell.$$
Let $\tilde \phi(\bm y,z)$ be the linear function obtained from $\phi(\bm y)$ by following the strengthening procedure
in Theorem~\ref{thm:cut_strengthening}.
Then $\tilde \phi(\bm y,z)$ is written as follows:
$$\tilde \phi(\bm y,z) = \left<\nabla f(\bm y^*),\bm y-\bm y^0\right> + (f(\bm y^*) - f(\bm y^0) - \left<\nabla f(\bm y^*),\bm y^* - \bm y^0\right>)z + f(\bm y^0)$$
and the cut $\tilde \phi(\bm y,z) \leq x_\ell$ is equivalent to the perspective cut \eqref{eq:pc} at point $(\hat{\bm y},\hat z)$.
\end{theorem}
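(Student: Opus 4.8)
The plan is to simply apply Theorem~\ref{thm:cut_strengthening} to the specific linear function $\phi(\bm y) = f(\bm y^*) + \left<\nabla f(\bm y^*),\bm y - \bm y^*\right>$ and then verify that the resulting $\tilde\phi(\bm y,z)$ coincides with the left-hand side of the perspective cut~\eqref{eq:pc}. Before invoking the theorem, I would first check its hypothesis: that $\phi(\bm y)\le x_\ell$ is valid for $F^1$. This is exactly the statement that the gradient cut~\eqref{eq:gradientcut} of the convex function $f$ at the point $\bm y^*$ is a valid underestimator, i.e.\ $f(\bm y)\ge f(\bm y^*)+\left<\nabla f(\bm y^*),\bm y-\bm y^*\right>$ for all $\bm y$, which holds by convexity of $f$; combined with $x_\ell\ge f(\bm y)$ on $F^1$ this gives validity. (It is worth noting $\bm y^*\in[\underline{\bm y},\overline{\bm y}]$ since $\hat{\bm y},\hat z$ satisfy the relaxed semi-continuity bounds and $\bm y^*$ is obtained by the corresponding affine scaling, so the gradient is taken at a sensible point, though the argument does not strictly need this.)

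The core of the proof is then a direct algebraic substitution. By Theorem~\ref{thm:cut_strengthening},
\[
\tilde\phi(\bm y,z) = \phi(\bm y) + \bigl(f(\bm y^0)-\phi(\bm y^0)\bigr)(1-z).
\]
I would expand $\phi(\bm y) = f(\bm y^*) + \left<\nabla f(\bm y^*),\bm y - \bm y^*\right>$ and $\phi(\bm y^0) = f(\bm y^*) + \left<\nabla f(\bm y^*),\bm y^0 - \bm y^*\right>$, substitute, and collect terms. The $f(\bm y^*)$ terms from $\phi(\bm y)$ and $-\phi(\bm y^0)$ cancel, $\left<\nabla f(\bm y^*),\bm y-\bm y^*\right> - \left<\nabla f(\bm y^*),\bm y^0-\bm y^*\right> = \left<\nabla f(\bm y^*),\bm y-\bm y^0\right>$, and rewriting $(1-z) = 1 - z$ and regrouping the constant and the coefficient of $z$ should land exactly on
\[
\left<\nabla f(\bm y^*),\bm y-\bm y^0\right> + \bigl(f(\bm y^*) - f(\bm y^0) - \left<\nabla f(\bm y^*),\bm y^* - \bm y^0\right>\bigr)z + f(\bm y^0),
\]
which is the displayed formula in the statement and, by definition, the left-hand side of~\eqref{eq:pc} with $\bm y^* = (\hat{\bm y}-\bm y^0)/\hat z + \bm y^0$. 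This establishes the claimed identity, and hence equivalence of the two cuts.

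There is no real obstacle here — the result is essentially a bookkeeping exercise — but the one point deserving a sentence of care is matching the definition of $\bm y^*$ used here with the one in~\eqref{eq:pc}: in Section~\ref{sec:theory} the perspective cut~\eqref{eq:pc} is stated ``at $\bm y^*$'' with $\bm y^* = (\hat{\bm y}-\bm y^0)/\hat z + \bm y^0$, which is precisely the point at which we linearize $f$ above, so the two are literally the same formula once the substitution is carried out. I would close by remarking that this also re-derives, as a special case, the validity of perspective cuts for convex $f$ directly from Theorem~\ref{thm:cut_strengthening}, without separate first-order analysis of $\overline{co}f^d$.
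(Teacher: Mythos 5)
Your proposal is correct and follows essentially the same route as the paper: compute the coefficient $f(\bm y^0)-\phi(\bm y^0)$ from Theorem~\ref{thm:cut_strengthening}, add it times $(1-z)$ to the gradient cut, and collect terms to recover~\eqref{eq:pc}. Your additional explicit check that the gradient cut is valid for $F^1$ (by convexity of $f$) is a small but worthwhile addition that the paper leaves implicit.
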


\begin{proof}
The coefficient of $(1-z)$ in $\tilde \phi(\bm y,z)$ is
$$\alpha = f(\bm y^0) - \phi(\bm y^0) = f(\bm y^0) - f(\bm y^*) - \left<\nabla f(\bm y^*),\bm y^0 - \bm y^*\right>.$$

Adding $\alpha(1-z)$ to the left hand side of the gradient cut produces the perspective cut \eqref{eq:pc}:
$$\tilde \phi(\bm y,z) = \phi(\bm y) + \alpha(1-z) =$$
$$\left<\nabla f(\bm y^*),\bm y-\bm y^0\right> + (f(\bm y^*) - f(\bm y^0) - \left<\nabla f(\bm y^*),\bm y^* - \bm y^0\right>)z + f(\bm y^0).$$
\qed
\end{proof}

To paraphrase, for a convex function $f$ the perspective cut at a solution
$(\hat x_\ell, \hat{\bm y}, \hat z)$ of the LP relaxation can equivalently be obtained
by first generating a gradient cut for $f$ at the modified point $\bm y^*$ and then
applying the strengthening procedure from
Theorem~\ref{thm:cut_strengthening}.

Let us consider an example to illustrate the cut extension method.

\begin{example}\label{example}
Consider a constraint $f(y) = -y^3 + y \leq x_\ell$.
The boundary of the feasible region is shown in Figure~\ref{fig:cut_str_ex} by the dark gray nonlinear surface,
and the feasible points are located above it.
Let $0.5z \leq y \leq z$, where $y$ is a scalar, semi-continuous variable modeled using the binary variable $z \in\{0,1\}$.

\begin{figure}[!ht]
	\begin{subfigure}{.49\linewidth}
	  \centering
	  \includegraphics[width=1\linewidth]{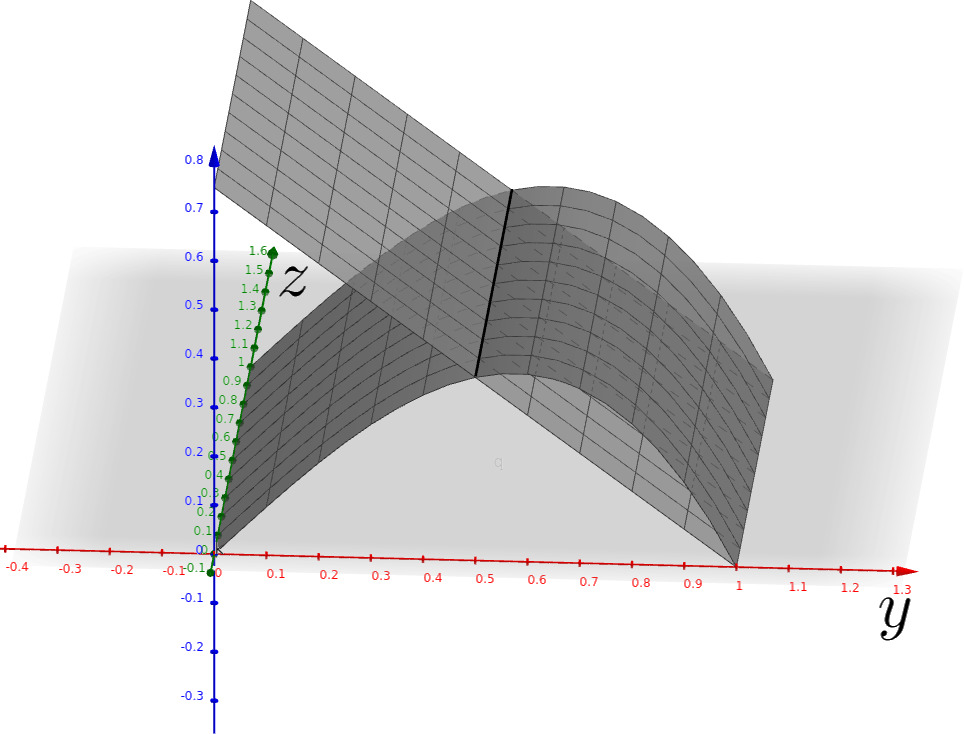}
	  \caption{original cut valid for $z=1$, $y \in [0.5,1]$}
          \label{fig:cut_str_ex0}
	\end{subfigure}
	\begin{subfigure}{.49\linewidth}
	  \centering
	  \includegraphics[width=1\linewidth]{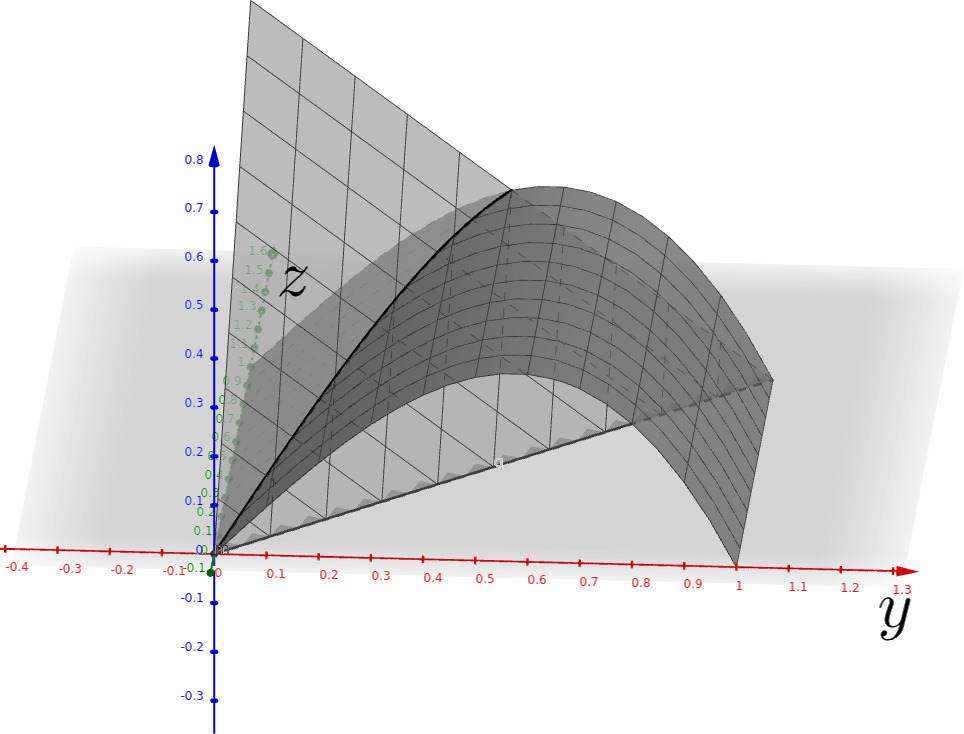}
	  \caption{generalized perspective cut}
          \label{fig:cut_str_ex1}
	\end{subfigure}
	\caption{Example of cut extension}
	\label{fig:cut_str_ex}
\end{figure}

First we find an underestimator of $f(y)$ valid for $z=1$.
In this case, $y$ is constrained to belong to the interval $[0.5,1]$.
Since $f(y)$ is concave on $[0.5,1]$, the underestimator is the secant through points $(0.5,f(0.5))$
and $(1,f(1))$:
$$f^{sec}(y) = -0.75y + 0.75.$$
The cut $f^{sec} \leq x_\ell$ (shown in Figure~\ref{fig:cut_str_ex0}) is not valid for the
whole feasible set.
In particular, a feasible point $(y,z,x_\ell) = (0,0,0)$ violates the cut:
$f^{sec}(0,0) = 0.75 > 0 = x_\ell$.

Now we extend the cut so that to ensure validity at $z=0$.
By Theorem~\ref{thm:cut_strengthening}, the new cut is written as:
$$\tilde f^{sec} = f^{sec} + (f(y^0) - f^{sec}(y^0))(1-z) = -0.75y + 0.75z.$$
This cut, shown in Figure~\ref{fig:cut_str_ex1}, is valid for the whole feasible set given
by the cubic constraint and the semi-continuity condition.
\end{example}

\section{Implementation of perspective cuts}\label{sec:impl}

An effective implementation of perspective cuts within a general-purpose solver requires providing methods for
detecting suitable structures in a general problem and generating the cuts during the solution process.
In the following, we describe our implementation within SCIP, but many considerations discussed here will be
applicable to MINLP solvers in general.

\subsection{Organization of nonlinear constraints in SCIP}

SCIP builds a relaxation for the MINLP \eqref{minlp} by means of an extended formulation, where
auxiliary variables $\bm w$ are introduced for the subexpressions that constitute the constraint
functions $g(\bm x,\bm y,\bm z)$.
Without loss of generality, we can assume that \eqref{eq:conss} has been replaced by a new system
\begin{equation}\label{eq:extform}
  \begin{aligned}
 &h_i(\bm x,\bm y,w_1,\ldots,w_{i-1},\bm z) \lesseqqgtr w_i, \qquad i = 1,\ldots,m',\\
 &\bm w^{\ell} \leq \bm w \leq \bm w^u,
 \end{aligned}
\end{equation}
where $\bm w^l$ and $\bm w^u$ denote global lower and upper bounds on $\bm w$.

The handling of nonlinear constraints in the version of SCIP used for this work is
performed by modules called ``nonlinearity handlers''.
Each nonlinearity handler works on a specific structure (e.g. quadratic, convex, etc.) and provides
callback methods.
For the purposes of this paper, three types of callbacks are relevant:
\begin{itemize}
\item \emph{Detection callbacks} receive an expression and determine whether it is suitable for the
nonlinearity handler.
\item \emph{Estimation callbacks} provide linear under- and overestimators given an expression and a
point at which to linearize it.
\item \emph{Enforcement callbacks} enforce a given violated constraint by adding cutting planes,
tightening bounds, detecting infeasibility, etc.
\end{itemize}

Our addition of generalized perspective cuts is implemented via a specialized
perspective nonlinearity handler.

\subsection{Structure detection}\label{subsection:detection}

The detection algorithm identifies constraints of the form \eqref{eq:extform}, where $h_i$ is
nonlinear and at least one other nonlinearity handler provides an estimation callback for it.
All variables that $h_{i}$ depends on must be semi-continuous with at least one common indicator
variable.
If several binary variables satisfying this condition are found, all such variables are stored for
use in cut generation.

A special case is that of $h_i$ being a sum.
Here, only the variables appearing in nonlinear terms of the sum are required to be
semi-continuous.

To determine whether a variable $y_j$ is semi-continuous, the detection callback of the perspective
nonlinearity handler searches for pairs of implied bounds on $y_j$ with the same indicator $z_k$:
\begin{align*}
y_j &\leq \alpha^{(u)} z_k + \beta^{(u)},\\
y_j &\geq \alpha^{(\ell)} z_k + \beta^{(\ell)}.
\end{align*}
If $\beta^{(u)} = \beta^{(\ell)}$, then $y_j$ is a semi-continuous variable and $y_j^0 = \beta^{(u)}$,
$\underline{y}_j = \alpha^{(\ell)} + \beta^{(\ell)}$ and $\overline{y}_j = \alpha^{(u)} + \beta^{(u)}$.

This information can be obtained either directly from linear constraints in $y_j$ and $z_k$, or by
finding implicit relations between $y_j$ and $z_k$.
Such relations can be detected by probing, which fixes $z_k$ to its possible values and propagates
all constraints in the problem, thus detecting implications of $z_k = 0$ and $z_k = 1$.
SCIP stores the implied bounds in a globally available data structure.

In addition, the perspective nonlinearity handler detects semi-continuous auxiliary
variables, that is, variables $w_i$ that were introduced to express the extended
formulation~\eqref{eq:extform}.
Given $h_i(\bm y, w_1,\ldots,w_{i-1}) \lesseqqgtr w_i$, where $\bm y,w_1,\ldots,w_{i-1}$ are semi-continuous variables
depending on the same indicator $z_k$, the auxiliary variable $w_i$ is semi-continuous with
$w_i^0 = h_i(\bm y^0,w^0_1,\ldots,w^0_{i-1})$ and $[\underline{w}_i,\overline{w}_i] = h_i([\underline{\bm y}^0,\overline{\bm y}^0],[\underline{w}_1,\overline{w}_1],\ldots,[\underline{w}_{i-1},\overline{w}_{i-1}])$ computed by interval arithmetics.

According to Theorem~\ref{thm:cut_strengthening}, the constraint must have the form
$$w_i \geq h_i(\bm y,w_1,\ldots,w_{i-1}),$$
where all variables $\bm y, w_1,\ldots, w_{i-1}$ are semi-continuous with respect to the same indicator
$z_k$.
In our implementation we allow a more general form:
\begin{align}h_i(\bm x,\bm y,w_1,\ldots,w_{i-1},\bm z) &= h^{sc}_{i,k}(\bm y,w_1,\ldots,w_r) \label{eq:extform_gen}\\
&+ h^{nsc}_{i,k}(\bm x,w_{r+1},\ldots,w_{i-1},\bm z) \lesseqqgtr w_i, \nonumber\end{align}
where the variables are assumed to be sorted so that semi-continuous auxiliary variables
$w_1,\ldots,w_r$ come before the non-semi-continuous auxiliary variables $w_{r+1},\ldots,w_{i-1}$
and the auxiliary variable $w_i$ representing $h_i$.

Thus, for each suitable indicator $z_k$ the function
$h_i$ is split up into the semi-continuous part $h^{sc}_{i,k}$, which can depend only on variables
that are semi-continuous with respect to indicator $z_k$, and a non-semi-continuous
part $h^{nsc}_{i,k}$, which depends on non-semi-continuous variables.
The non-semi-continuous part must be linear.
If the sum has a constant term, the constant is considered to be part of $h^{sc}_{i,k}$.

\subsection{Separation and strengthening of generalized perspective cuts}\label{subsection:separation}

During the cut generation loop, generalized perspective
cuts as in Theorem~\ref{thm:cut_strengthening} are constructed for
constraints of the form~\eqref{eq:extform_gen}.

In the following, let $\bm v$ denote the vector of all problem variables: $\bm v = (\bm x,\bm y,\bm w,\bm z)$,
and let $F^1_{i,k}$ and $F^0_{i,k}$ be the sets of points satisfying a constraint of the form~\eqref{eq:extform_gen} for a
given $i\in\{1,\ldots,m'\}$ together with implied variable bounds for $z_k=1$ and $z_k=0$, respectively.
For simplicity, we fix the inequality sign and consider ``less than or equal to'' constraints:
\begin{align*}
F^1_i = &\{\bm v \,|\, h_i(\bm x,\bm y,w_1,\ldots,w_{i-1},\bm z) \leq w_i, \bm y \in [\underline{\bm y}, \overline{\bm y}], \bm w \in [\underline{\bm w}, \overline{\bm w}], z_k=1\}, \\
F^0_i = &\{\bm v \,|\, h_i(\bm x,\bm y,w_1,\ldots,w_{i-1},\bm z) \leq w_i, \bm y = \bm y^0, \bm w = \bm w^0, z_k=0\}.
\end{align*}

Suppose that the point $\hat{\bm v}$ violates the nonlinear constraint:
$$h_i(\hat{\bm x},\hat{\bm y},\hat{w}_1,\ldots,\hat{w}_{i-1},\hat{\bm z}) > \hat{w}_i.$$
If $h_i$ is nonconvex and its estimators depend on variable bounds, the enforcement callback
first performs probing for $z_k=1$ in order to tighten the implied bounds $[\underline{\bm y}, \overline{\bm y}]$
and $[\underline{w}_j, \overline{w}_j]$ for $j \leq r$.

Estimation callbacks of non-perspective nonlinearity handlers are called in order to find valid cuts
that separate $\hat{\bm v}$ from $F^1_{i,k}$, which are then modified according to Theorem~\ref{thm:cut_strengthening}.
For a constraint of the generalized form $h_i = h^{sc}_{i,k} + h^{nsc}_{i,k} \leq w_i$, which is described in
Section~\ref{subsection:detection}, an estimation callback will provide an underestimator of $h_i$:
$$\underline{h}_i = \underline{h}^{sc}_{i,k} + h^{nsc}_{i,k}.$$

This underestimator consists of an underestimator of the semi-continuous part $\underline{h}^{sc}_{i,k}$
and the non-semi-continuous part $h^{nsc}_{i,k}$, which remains unchanged since it is already linear
and shares none of the variables with the semi-continuous part.
The extension procedure from Theorem~\ref{thm:cut_strengthening} is applied only to
$\underline{h}^{sc}_{i,k}$ to obtain $\underline{\tilde h}^{sc}_{i,k}$.
Since $h^{sc}_{i,k}$ depends only on semi-continuous variables, similar arguments to
Section~\ref{sec:genpcuts} hold for feasibility and tightness of the strengthened underestimator
$\underline{\tilde h}^{sc}_{i,k}$.
The strengthened underestimator of $h_i$ is then written as
$$\underline{\tilde h}_{i,k} = \underline{\tilde h}^{sc}_{i,k} + h^{nsc}_{i,k}.$$
If $\hat{\bm v}$ violates the cut $\underline{\tilde h}_{i,k} \leq w_i$, the cut is passed to the SCIP core
where it will be considered for addition to the LP relaxation.

Let us consider an example of generalized perspective cut separation by extending
Example~\ref{example}.

\begin{example}
The extended formulation of the constraint from Example~\ref{example} is written as:
$$h(x_l,y) = -y^3 + y - x_l \leq w \leq 0.$$

The semi-continuous part of $h$ is $h^{sc}(y) = -y^3 + y$, and the non-semi-continuous part is
$h^{nsc}(x_l) = -x_l$.
The perspective underestimator of $h$ is $\underline{h}(x_l,y,z) = -0.75y + 0.75z - x_l$ and the perspective
cut is written as follows:
$$-0.75y + 0.75z - x_l \leq w.$$

Suppose that the point to be separated is $(\hat x_l,\hat y, \hat z,\hat w) = (0,0.4,0.7,0)$.
Substituting the variables with their values at this point in the perspective cut, we get a
violated inequality $0.225 \leq 0$.
Therefore the cut is violated and will be considered for addition to the LP relaxation.
\end{example}

\section{Computational results}\label{sec:compres}

This section presents the results of computational experiments.
A development version of SCIP (githash \texttt{f0ee1d793d}) was used, together with the linear
solver SoPlex 5.0.1.3~\cite{gamrath2020scip} and the nonlinear solver
Ipopt 3.12.13~\cite{wachter2006implementation}.
All the experiments were run on a cluster of 3.60GHz Intel Xeon E5-2680 processors with 64 GB memory per node.
The time limit was set to one hour and the optimality gap limit to 0.01\%.

Throughout the section, we analyze the following settings,
each defined by the types of constraints for which perspective cuts are added:

\begin{itemize}
\item \emph{Off}: perspective cuts are disabled;

\item \emph{Convex}: perspective cuts are enabled only for convex constraints;

\item \emph{Full}: perspective cuts are enabled for both convex and nonconvex constraints.
\end{itemize}

\subsection{Detection of suitable structures}

Out of the 1703 instances of MINLPLib, suitable constraints of the extended form~\eqref{eq:extform_gen}
were detected for 186 instances.
Table~\ref{tab:detection} shows the numbers of instances where at least one such constraint was
detected, when counting: all instances, instances where detection succeeded for convex constraints
only, instances where detection succeeded both for convex and nonconvex constraints and instances
where detection succeeded for nonconvex constraints only.

\begin{table}[!ht]
   \small
   \caption{Detection results}
   \centering
   \label{tab:detection}
   \begin{tabular}{l @{~~} | >{\centering}p{2cm} | >{\centering}p{2cm} | >{\centering\arraybackslash}p{2cm} }
      \hline\noalign{\smallskip}
        All  &  Convex  &  Both  &  Nonconvex \\
      \noalign{\smallskip}\hline\noalign{\smallskip}
        186  &  89      &  53    &  44        \\
      \noalign{\smallskip}\hline
   \end{tabular}
\end{table}

Only those instances were counted for Table~\ref{tab:detection} where suitable constraints
were detected in the main problem.
Additionally, sometimes a constraint can only be detected by the perspective nonlinearity
handler in a subproblem.
A typical example of this is a heuristic creating a subproblem to represent a restricted version of
the main problem.
This often involves fixing some variables or modifying bounds, which can result in new
semi-continuous variables and thus new suitable constraints.
In our test set there are 3 instances where suitable constraints were found only in subproblems.
However, subproblem detections are not guaranteed to have an impact on performance.
Because of this, subproblem detections are not counted in Table~\ref{tab:detection}.

\subsection{Overall performance impact}

This subsection evaluates the overall impact of perspective cuts on the performance of SCIP.
Its purpose is to give an overview of how the three major settings compare against each other
before moving onto more detailed comparisons in the next subsection.

In order to robustify our results against the effects of performance variability~\cite{lodi2013performance},
four different permutations of the order of variables and constraints were applied to each of the
186~instances, for which suitable structures were detected.
Each permutation is treated as a separate instance, and together with the instances without
any permutation they comprise a test set of 930 instances.
In our analysis, we exclude instances where one of the solver settings
encountered numerical troubles or where the numerical results of the different
solver settings are inconsistent.

\begin{table}[!ht]
   \small
   \caption{Overview of solved instances}
   \centering
   \label{tab:overall_instances}
   \begin{tabular}{l @{~~}|| >{\centering}p{1.6cm} | >{\centering}p{1.6cm} | >{\centering\arraybackslash}p{1.6cm} }
      \hline\noalign{\smallskip}
                              &  Off  &  Convex  &  Full \\
      \noalign{\smallskip}\hline\noalign{\smallskip}
        Solved                &  741  &  764     &  759  \\
        Limit                 &  175  &  154     &  154  \\
        Fails              &  14   &  12      &  17   \\
      \noalign{\smallskip}\hline
   \end{tabular}
\end{table}

Table~\ref{tab:overall_instances} provides an overview of the number of
instances solved to global optimality by each setting.
The row \emph{Limit} contains the count of instances where the time limit was reached.
The row \emph{Fails} reports the number of instances where numerical troubles
were encountered.
The largest number of instances solved with a given setting was 764, yielded by setting
\emph{Convex}, which had both the smallest number of numerical fails and time outs.
It is followed by \emph{Full}, which solved 759 instances.
The setting \emph{Off} solved the least number of instances overall.

\begin{table}[!ht]
   \small
   \caption{Overall results on the subset of 672 affected instances}
   \centering
   \label{tab:results_overall}
   \begin{tabular}{l @{~~}|| >{\centering}p{1.6cm} | >{\centering}p{1.6cm} | >{\centering\arraybackslash}p{1.6cm} }
      \hline\noalign{\smallskip}
               & Off   & Convex  & Full \\
      \noalign{\smallskip}\hline\noalign{\smallskip}
        Time & 13.79 & 11.23 & 11.27 \\
        Relative time & 1.00 & 0.81 & 0.82 \\
      \noalign{\smallskip}\hline\noalign{\smallskip}
        Nodes & 620 & 479 & 472 \\
        Relative nodes & 1.00 & 0.77 & 0.76 \\
      \noalign{\smallskip}\hline
   \end{tabular}
\end{table}

Table~\ref{tab:results_overall} shows the shifted geometric mean of the running time in seconds
(with a shift of 1~second) and the shifted geometric mean of the number of branch-and-bound nodes
(with a shift of 100~nodes).
All numbers in the table are computed for the subset of 672 \emph{affected} instances:
all instances where at least two of the three settings yielded a different solving path (judged by a different number of linear
programming iterations), where the solver failed with none of the settings, and solved the
instance to optimality with at least one setting.

From Table~\ref{tab:results_overall} one can see that a significant improvement is achieved when
enabling perspective cuts for convex constraints.
The results with the settings \emph{Convex} and \emph{Full}, however, are almost identical.

\subsection{Detailed comparisons}

In this subsection we provide a more detailed analysis of the performance results by comparing pairs
of settings, in order to evaluate the impact of each major feature more thoroughly.

First, we present the numbers of relevant and affected instances in Table~\ref{tab:pairwise_instances}.
It has the following rows:
\begin{itemize}
\item \emph{Relevant:} the number of instances where more expressions are detected with the second setting than
with the first setting;
\item \emph{Affected:} the number of instances which were solved with at least one of the two settings,
where the number of linear programming iterations differs between the two settings and the solver
failed with none of the two settings.
\end{itemize}

From Table~\ref{tab:pairwise_instances} we can see that while nonconvex
structures can be found on more than half of the test set, applying
generalized perspective cuts to nonconvex functions affects the solving path
less often than applying perspective cuts to convex functions.

\begin{table}[!ht]
   \small
   \caption{Relevant and affected instances}
   \centering
   \label{tab:pairwise_instances}
   \begin{tabular}{l @{~~}|| >{\centering}p{3.6cm} | >{\centering\arraybackslash}p{3.6cm} }
      \hline\noalign{\smallskip}
                              &  \emph{Off}  vs  \emph{Convex}  &  \emph{Convex}  vs  \emph{Full} \\
      \noalign{\smallskip}\hline\noalign{\smallskip}
        Relevant              &  710 & 485 \\
        Affected              &  544 & 205 \\
      \noalign{\smallskip}\hline
   \end{tabular}
\end{table}

Table~\ref{tab:rootnode} summarizes the effect of perspective cuts on the dual bound at the end of the root node.
It reports the numbers of instances of the subset \emph{Relevant} where the dual bound was better with the corresponding
setting by a percentage that is specified in the first column, as well as the numbers of instances
where the dual bound change was less than $5\%$.

\begin{table}[!ht]
   \small
   \caption{Root node dual bound differences}
   \centering
   \label{tab:rootnode}
   \begin{tabular}{l @{~~}|| >{\centering}p{1.6cm} | >{\centering}p{1.6cm} || >{\centering}p{1.6cm} | >{\centering\arraybackslash}p{1.6cm} }
      \hline\noalign{\smallskip}
                                        &  Off  &  Convex  &  Convex  &  Full \\
      \noalign{\smallskip}\hline\noalign{\smallskip}
        better by $>50\%$ &  16    &  46     &  0       &  31   \\
        better by $5$--$50\%$ & 25    &  39     &  14      &  11   \\
        same within $5\%$  &  \multicolumn{2}{c||}{584} & \multicolumn{2}{c}{429} \\
      \noalign{\smallskip}\hline
   \end{tabular}
\end{table}

A significant difference in root node dual bound can be observed only for a relatively small number
of instances.
The comparison between \emph{Off} and \emph{Convex} is consistent with the results in the above
tables, with \emph{Convex} improving more dual bounds than \emph{Off}.
The comparison between \emph{Convex} and \emph{Full} deserves a closer look.
When inspecting medium dual bound changes (5--50\%), \emph{Convex} yields a better
bound than \emph{Full} slightly more often than the other way round.
However, when considering only large improvements ($>50$\%), we observe that those were always
due to setting \emph{Full}.
From this we conclude that, overall, enabling perspective cuts for nonconvex constraints improves
the quality of dual bounds in the root node.

Table~\ref{tab:pairwise_time} compares the running time when considering pairs of settings and
the corresponding subsets of \emph{affected} instances.
It has the following rows:
\begin{itemize}
\item Time: shifted geometric mean of the running time in seconds (with a shift of 1~second);
\item Relative time: shifted geometric mean of the running time relative to the first of the two settings;
\item Faster: the number of instances where SCIP was faster with the given setting than with the other
setting by at least $25\%$.
\end{itemize}
In order to analyze the impact on subsets of increasingly hard instances, these rows repeat for
three subsets of instances given by time brackets $[t,3600]$, which contain the
instances that were solved to optimality with both settings and took at
least~$t$~seconds by at least one setting.

\begin{table}[!ht]
   \small
   \caption{Time on subsets of affected instances}
   \centering
   \label{tab:pairwise_time}
   \begin{tabular}{l @{~~}|| >{\centering}p{1.4cm} | >{\centering}p{1.4cm} || >{\centering}p{1.4cm} | >{\centering\arraybackslash}p{1.4cm} }
      \hline\noalign{\smallskip}
                              &  Off  &  Convex  &  Convex  &  Full \\
      \noalign{\smallskip}\hline\noalign{\smallskip}
        Instances in $[0,3600]$:           & \multicolumn{2}{c||}{544} & \multicolumn{2}{c}{205} \\
        \hspace{0.5cm}Time    &  12.53 & 9.70    &   24.30  &  24.82 \\
        \hspace{0.5cm}Relative time   &  1.00 &  0.77    &   1.00   &  1.02  \\
        \hspace{0.5cm}Faster  &  95   &  193     &   43     &  51    \\
      \noalign{\smallskip}\hline\noalign{\smallskip}
        Instances in $[10,3600]$: & \multicolumn{2}{c||}{276} & \multicolumn{2}{c}{149} \\
        \hspace{0.5cm}Time    & 70.96 &  45.27   &   57.47  &  59.12 \\
        \hspace{0.5cm}Relative time   & 1.00  &  0.64    &   1.00   &  1.03  \\
        \hspace{0.5cm}Faster  & 50    &  122     &   29     &  35    \\
      \noalign{\smallskip}\hline\noalign{\smallskip}
        Instances in $[100,3600]$: & \multicolumn{2}{c||}{100} & \multicolumn{2}{c}{49} \\
        \hspace{0.5cm}Time    & 506.17 & 183.90  &   263.57 &  285.85 \\
        \hspace{0.5cm}Relative time   & 1.00  &  0.36    &   1.00   &  1.08 \\
        \hspace{0.5cm}Faster  & 18    &  64      &   13     &  15   \\
      \noalign{\smallskip}\hline\noalign{\smallskip}
        Instances in $[1000,3600]$: & \multicolumn{2}{c||}{45} & \multicolumn{2}{c}{14} \\
        \hspace{0.5cm}Time    & 1444.28 & 425.60 &  814.18  & 1034.83 \\
        \hspace{0.5cm}Relative time   & 1.00   & 0.29    &  1.00    & 1.27   \\
        \hspace{0.5cm}Faster  & 10     & 32      &  5       & 5      \\
      \noalign{\smallskip}\hline
   \end{tabular}
\end{table}

The results shown in Table~\ref{tab:pairwise_time} strongly confirm that enabling perspective cuts for
convex constraints decreases the mean running time.
This effect becomes more pronounced as the difficulty of the instances increases.
On instances that took at least $100$~seconds to solve, setting \emph{Convex}
was faster almost by a factor of~3.
The additional activation of perspective cuts for nonconvex constraints, however, rather had a detrimental effect on performance, especially as
instances become more difficult.
This is despite the fact that there are more speed-ups than slow-downs when switching from setting
\emph{Convex} to setting \emph{Full}, as seen from the rows \emph{Faster}.
Hence, the increase in the mean time is due to significant slow-downs on a few challenging
instances.
However, these observed slow-downs should not be overestimated since the size of
the subsets $[100,3600]$ and $[1000,3600]$ are comparatively small.

A comparison of branch-and-bound tree sizes is given in Table~\ref{tab:pairwise_nodes}.
Again, a consistent improvement is observed when enabling perspective cuts for convex
expressions.  This improvement becomes more pronounced as the instances become more challenging.
Here we also observe an overall improvement when enabling perspective cuts for nonconvex constraints.
However, on the harder subsets $[100,3600]$ and $[1000,3600]$, \emph{Convex} still remains the best setting.

\begin{table}[!ht]
   \small
   \caption{Number of nodes on subsets of affected instances}
   \centering
   \label{tab:pairwise_nodes}
   \begin{tabular}{l @{~~}|| >{\centering}p{1.4cm} | >{\centering}p{1.4cm} || >{\centering}p{1.4cm} | >{\centering\arraybackslash}p{1.4cm} }
      \hline\noalign{\smallskip}
                              &  Off  &  Convex  &  Convex  &  Full \\
      \noalign{\smallskip}\hline\noalign{\smallskip}
        Nodes on $[0,3600]$   &  775  &  567     & 619      & 590  \\
        Relative              &  1.00 &  0.73    & 1.00     & 0.95  \\
      \noalign{\smallskip}\hline\noalign{\smallskip}
        Nodes on $[10,3600]$  &  4289 &  2436    & 1188     & 1170 \\
        Relative              & 1.00  &  0.57    & 1.00     & 0.98  \\
      \noalign{\smallskip}\hline\noalign{\smallskip}
        Nodes on $[100,3600]$ & 24924 & 7819     & 14891    & 15503 \\
        Relative              & 1.00  &  0.31    & 1.00     & 1.04 \\
      \noalign{\smallskip}\hline\noalign{\smallskip}
        Nodes on $[1000,3600]$ & 46517 & 15638   & 166889   & 199558 \\
        Relative              & 1.00   & 0.34    &  1.00    & 1.20   \\
      \noalign{\smallskip}\hline
   \end{tabular}
\end{table}

\begin{figure}[!ht]
	\begin{subfigure}{.49\linewidth}
	  \centering
	  \includegraphics[width=1\linewidth]{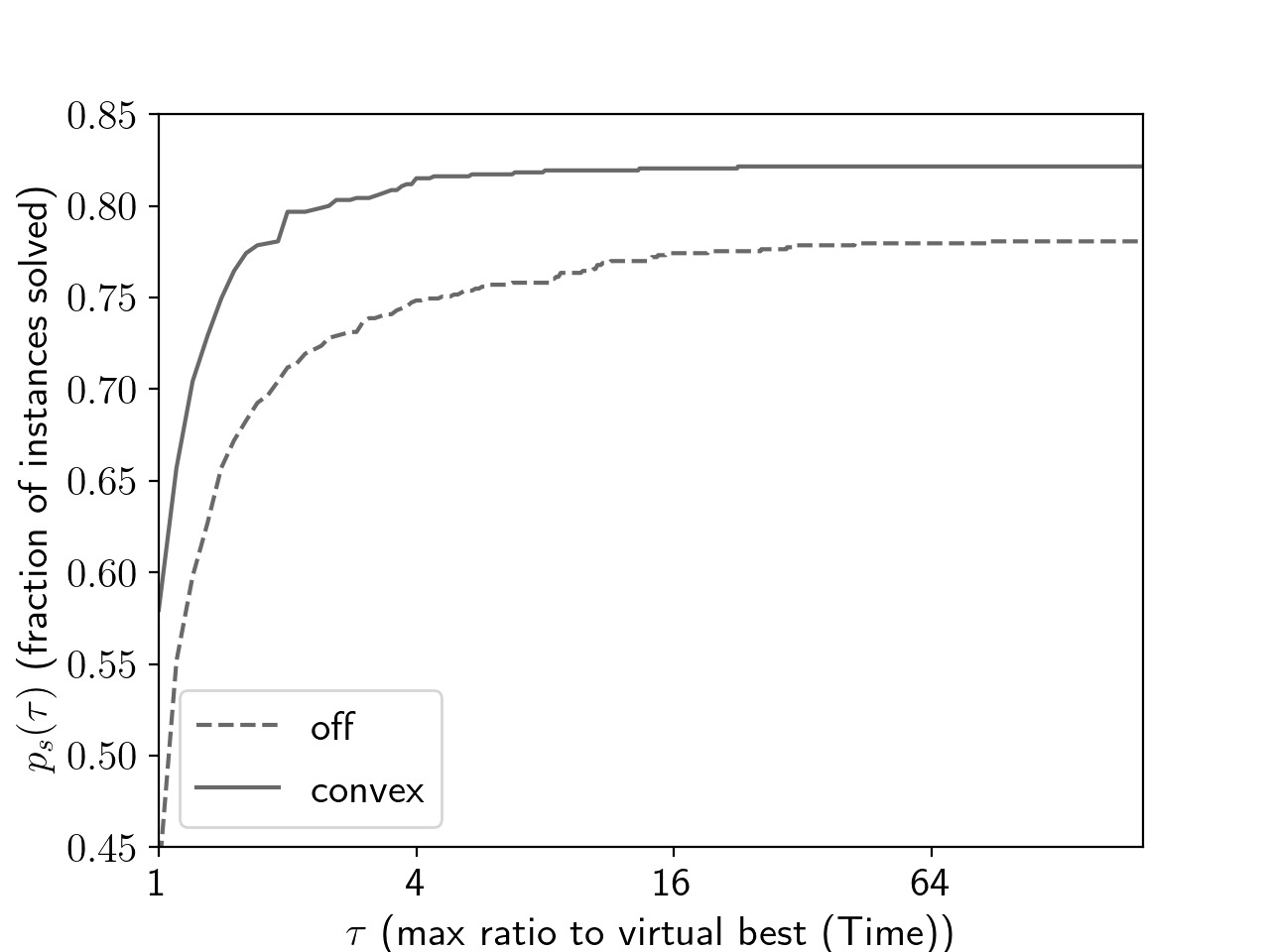}
	\end{subfigure}
	\begin{subfigure}{.49\linewidth}
	  \centering
	  \includegraphics[width=1\linewidth]{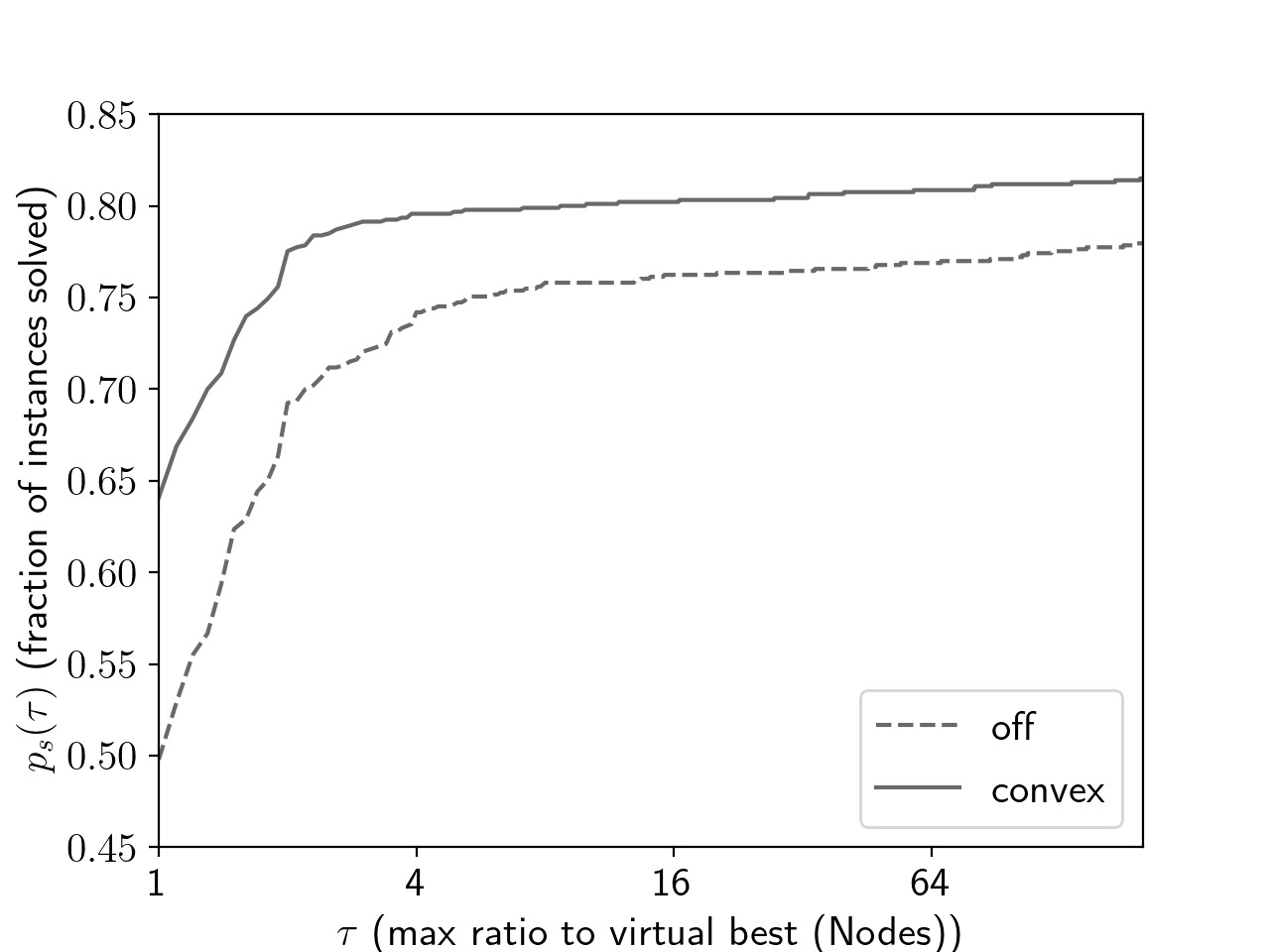}
	\end{subfigure}
	\caption{Performance profiles comparing \emph{Off} and \emph{Convex}}
	\label{fig:relppoffvsconvex}
\end{figure}

Figures~\ref{fig:relppoffvsconvex} and~\ref{fig:relppconvexvsfull} show performance
profiles~\cite{dolan2002benchmarking} for running time
and number of nodes with settings \emph{Off} and \emph{Convex} and settings \emph{Convex} and
\emph{Full}, respectively.
Let $t_{i,s}$ denote the running time for instance $i$ with setting $s$.
The virtual best setting used for the running time performance profiles, denoted by index $vb$, is
defined as a setting whose running time for each instance is equal to the minimum of the running
times with the two settings that are being compared: $t_{i,vb}=\min_s\{t_{i,s}\}$.
The horizontal axis represents the maximum allowed ratios to the time with the virtual best
setting, denoted by $\tau$.
The vertical axis represents the fraction of instances solved within the maximum allowed fraction of
time of the virtual best, denoted as $p_s(\tau)$:
$$p_s(\tau) = \frac{\text{number of instances } i \text{ s.t.: } t_{i,s} \leq \tau
\cdot t_{i,vb}}{\text{total number of instances}}.$$
Performance profiles for the number of nodes in the
branch-and-bound tree are constructed similarly,
the only difference being that $t_{.,.}$ is replaced everywhere with $n_{.,.}$, which represents
the number of nodes per instance and setting.

From Figure~\ref{fig:relppoffvsconvex} we see that setting \emph{Convex} dominates setting \emph{Off}.
With \emph{Convex}, over 55\% of
instances are solved faster or as fast as with setting \emph{Off}.
It is able to solve around $80\%$ of instances within a factor of 4 of the best time, and the curve
approaches approximately $82\%$ in the limit, i.e., \emph{Convex} is able to solve around $82\%$ of
the instances.
The respective numbers for \emph{Off} lie at around $10\%$ lower than those for \emph{Convex}.
A very similar picture is observed for the number of nodes.

\begin{figure}[!ht]
	\begin{subfigure}{.49\linewidth}
	  \centering
	  \includegraphics[width=1\linewidth]{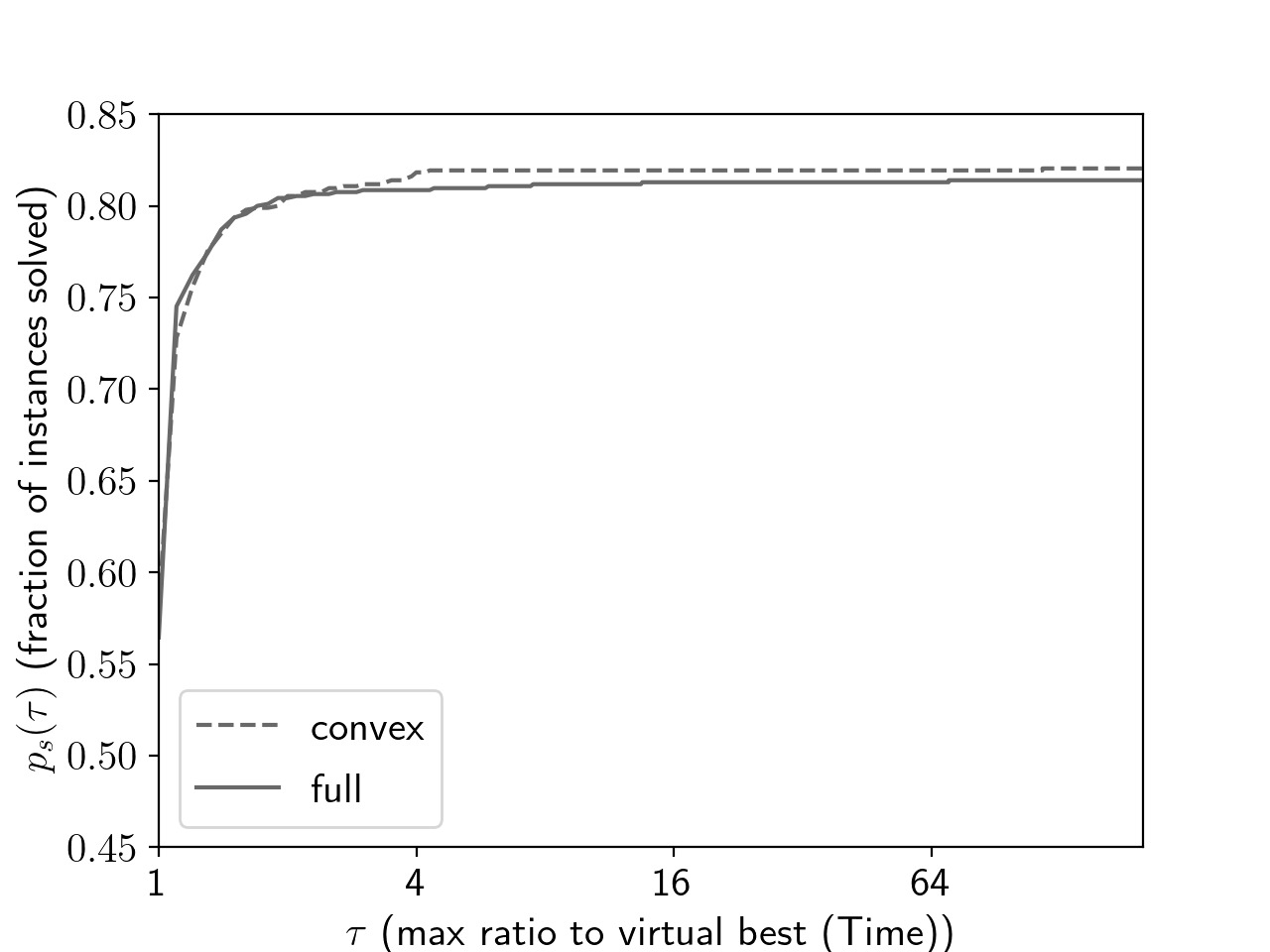}
	\end{subfigure}
	\begin{subfigure}{.49\linewidth}
	  \centering
	  \includegraphics[width=1\linewidth]{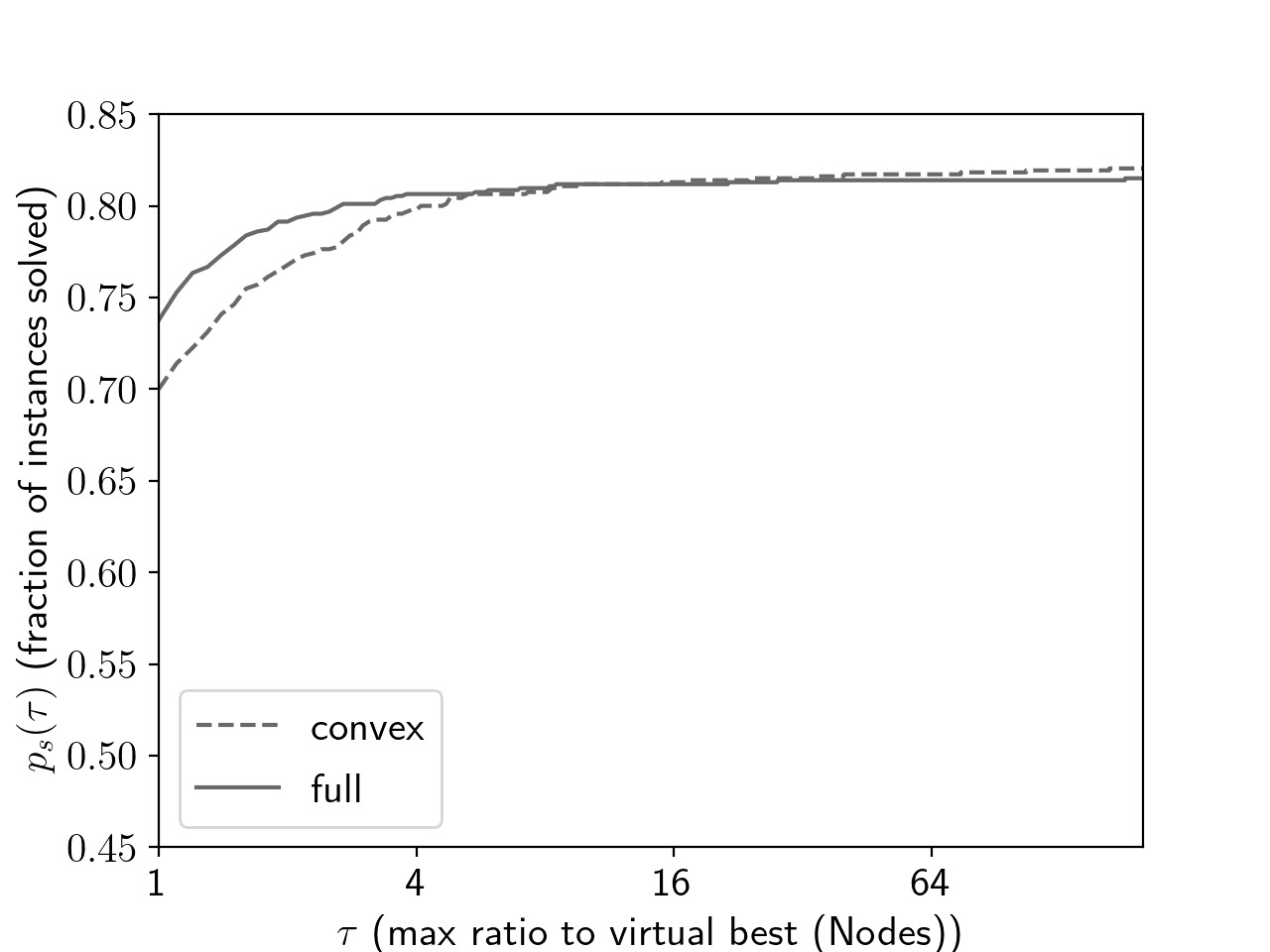}
	\end{subfigure}
	\caption{Performance profiles comparing \emph{Convex} and \emph{Full}}
	\label{fig:relppconvexvsfull}
\end{figure}

According to Figure~\ref{fig:relppconvexvsfull}, the setting \emph{Full} is roughly on par with
\emph{Convex} in terms of running time if the ratio we are interested in is below 3.
As the ratio increases, \emph{Convex} becomes the better setting, which reflects the fact that it
solves slightly more instances than \emph{Full}.
When looking at the number of nodes, \emph{Full} yields the best result for around 74\% of instances,
as opposed to around 70\% yielded by \emph{Convex}.
For ratios above 16, \emph{Convex} is better than \emph{Full}, which, again, is due to it solving
more instances.

\subsection{Feature evaluation: bound tightening}

As explained in Section~\ref{subsection:separation}, if the constraint is nonconvex and its
relaxation depends on variable bounds, then the indicator variable is first set to 1 and bound
tightening is performed.
A cut is then computed for this possibly tighter set $F^1_i$ and strengthened according to
Theorem~\ref{thm:cut_strengthening}.
In this subsection we evaluate the usefulness of this feature.

To this end, we introduce the setting \emph{Full-noBT}.
It is equivalent to \emph{Full} except that the bound tightening feature is disabled.
Table~\ref{tab:nobtvsfull} compares settings \emph{Full-noBT} and \emph{Full}
for the 68 \emph{affected} instances.

\begin{table}[!ht]
   \small
   \caption{Comparison between \emph{Full-noBT} and \emph{Full}}
   \centering
   \label{tab:nobtvsfull}
   \begin{tabular}{l @{~~}| >{\centering}p{1.2cm} | >{\centering}p{1.2cm} | >{\centering}p{1.2cm} | >{\centering}p{1.27cm} | >{\centering}p{1.2cm} | >{\centering\arraybackslash}p{1.2cm} }
      \hline\noalign{\smallskip}
                  &  Fails &   Limit  &  Solved  &  RootImpr $>50\%$  &  Time   &  Nodes \\
      \noalign{\smallskip}\hline\noalign{\smallskip}
        Full-noBT &  16    &   153    & 761      &  4                 &  34.45  &  2910  \\
        Full      &  17    &   154    & 759      &  25                &  33.68  &  2618  \\
      \noalign{\smallskip}\hline
   \end{tabular}
\end{table}

When bound tightening is disabled, two more instances are solved due to one less fail and one less
time out.
Enabling it, on the other hand, leads to large ($>50\%$) root node dual bounds improvements on
25 out of 68 affected instances and a comparable weakening of root node dual bounds only on 4
instances.
Enabling bound tightening also yields a small
decrease in the mean time ($2.2\%$) and a moderate decrease in the number of nodes ($10\%$).
According to these results, the two settings are very close in performance, \emph{Full-noBT} being
the slightly more reliable setting and \emph{Full} yielding smaller branch-and-bound trees and
slightly better solving times.

\begin{figure}[!ht]
	\begin{subfigure}{.49\linewidth}
	  \centering
	  \includegraphics[width=1\linewidth]{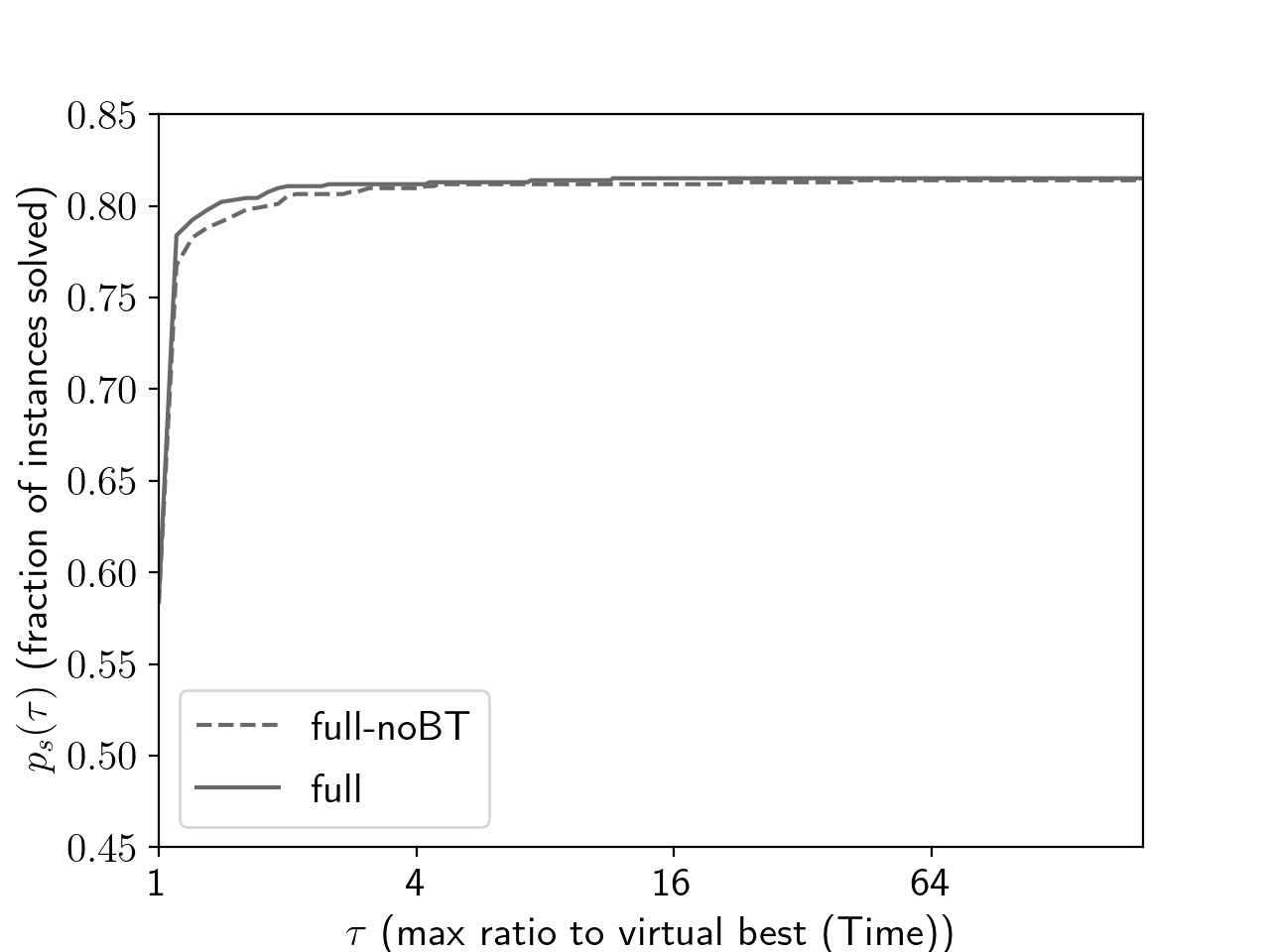}
	\end{subfigure}
	\begin{subfigure}{.49\linewidth}
	  \centering
	  \includegraphics[width=1\linewidth]{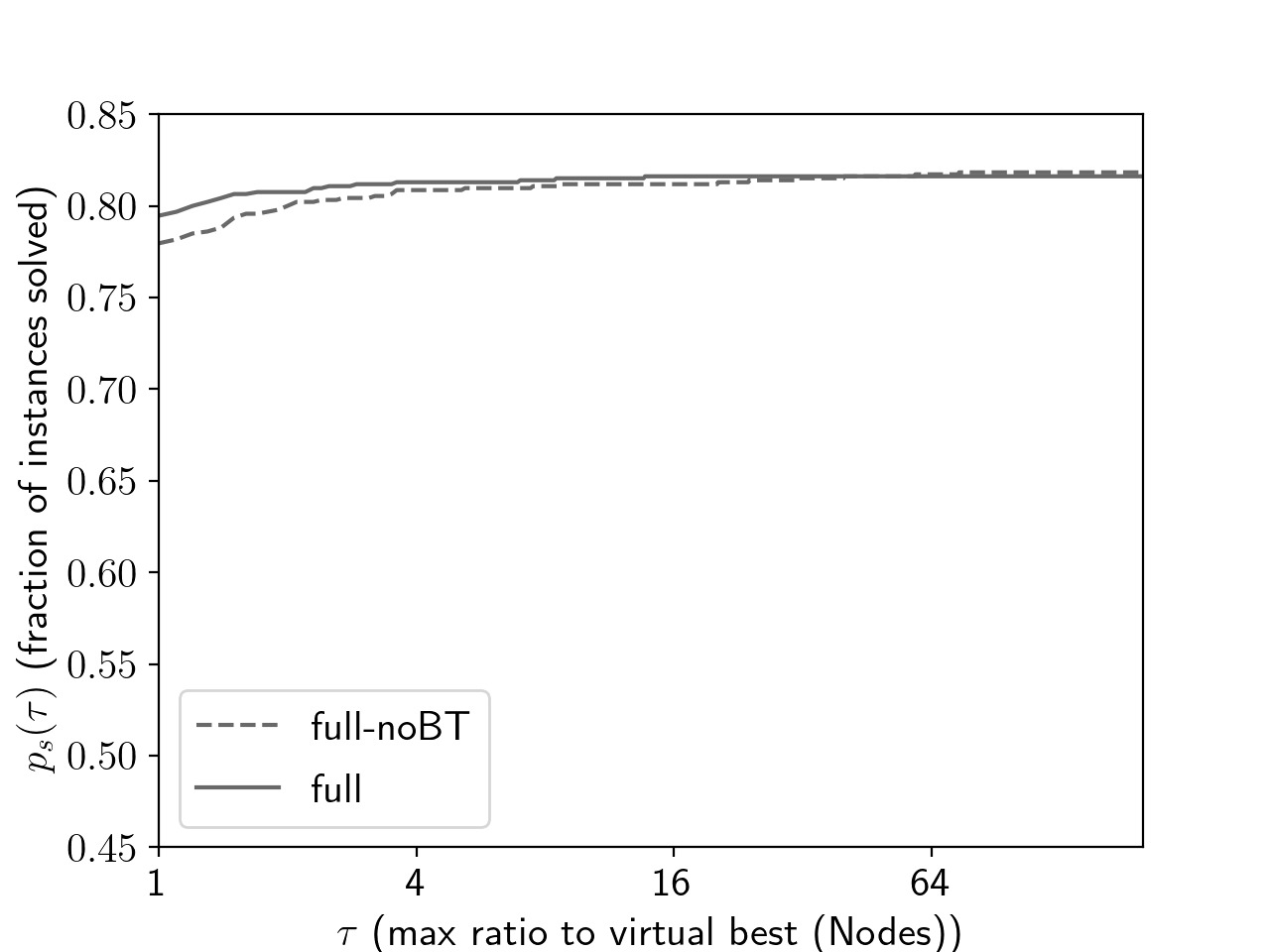}
	\end{subfigure}
	\caption{Performance profiles comparing \emph{Full-noBT} and \emph{Full}}
	\label{fig:relppnoBTvsFull}
\end{figure}

Performance profiles comparing \emph{Full-noBT} and \emph{Full}
are shown in Figure~\ref{fig:relppnoBTvsFull}.
The curves are very close since there are few affected instances.
The setting \emph{Full} performed slightly better than \emph{Full-noBT} both in terms of running
times and tree sizes, and the two settings are nearly identical in the limit.

\section{Conclusion}

In this paper we introduced a general method to construct perspective cuts not only for convex
constraints as previously proposed in the literature, but also for nonconvex
constraints, for which linear underestimators are readily available.
We conducted a computational study of perspective cuts for convex and nonconvex
constraints.
Relevant structures were detected in about 10\% of MINLPLib instances.
The computational results indicate that adding perspective cuts for convex constraints
reduces the mean running times and tree sizes by over 20\%.
Adding perspective cuts for nonconvex constraints can be detrimental to performance on challenging
instances and can lead to an increased amount of numerical issues, which is reflected in a small decrease in the number of solved instances.
Despite this, perspective cuts for nonconvex constraints reduce the geometric mean of the number of
nodes of the branch-and-bound tree by 5\% and improve dual bounds at the root node.

These results indicate that perspective cuts improve performance of ge\-ne\-ral-purpose solvers.
However, in order to efficiently utilize perspective cuts for nonconvex structures, careful
implementation and tuning is necessary.

One direction for future work is developing more sophisticated detection algorithms.
Some problems contain constraints that do not satisfy the requirements in our current
implementation, but with more careful analysis of the problem structure can be revealed
to be suitable for applying perspective cuts.
Another direction for future research is
generalizing the cut strengthening method to on/off variables whose ``off''
domain is a non-singleton interval, as well as to more general types of on/off sets.
Unlike the case considered in this paper, there is no single best choice of how to strengthen a
given valid cut in such a setting.
Moreover, increased complexity of the set makes developing a computationally efficient cut
strengthening method a more challenging task.

\section*{Funding}

The work for this article has been conducted within the Research Campus
MODAL funded by the German Federal Ministry of Education and Research (BMBF grant numbers 05M14ZAM,
05M20ZBM).

The described research activities are funded by the Federal Ministry for Economic Affairs and
Energy within the project EnBA-M (ID: 03ET1549D).

\bibliographystyle{spmpsci}
\bibliography{persp}

\end{document}